\definecolor{labelkey}{rgb}{0,0.08,0.45}
\definecolor{refkey}{rgb}{0,0.6,0.0}
\definecolor{Brown}{rgb}{0.45,0.0,0.05}
\definecolor{lime}{rgb}{0.00,0.8,0.0}
\definecolor{lblue}{rgb}{0.5,0.5,0.99}
\definecolor{myblue}{rgb}{.9, .9, 1} 
  \newcommand*\mybluebox[1]{%
    \colorbox{myblue}{\hspace{1em}#1\hspace{1em}}}
\newcommand{\sepp}{\setlength{\itemsep}{-2pt}}
\newcommand{\menge}[2]{\left\{{#1}~\big |~{#2}\right\}}
\newcommand{\norm}[1]{\left\| {#1} \right\|}
\newcommand{\scal}[2]{\left\langle {#1},{#2} \right\rangle}
\newcommand{\NN}{\ensuremath{\mathbb N}}
\newcommand{\nnn}{\ensuremath{{n\in{\mathbb N}}}}
\newcommand{\RR}{\ensuremath{\mathbb R}}
\newcommand{\RP}{\ensuremath{\mathbb{R}_+}}
\newcommand{\RPP}{\ensuremath{\mathbb{R}_{++}}}
\newcommand{\inte}{\ensuremath{\operatorname{int}}}
\newcommand{\aff}{\ensuremath{\operatorname{aff}}}
\newcommand{\cran}{\ensuremath{\overline{\operatorname{ran}}\,}}
\newcommand{\ran}{\ensuremath{\operatorname{ran}}}
\newcommand{\Fix}{\ensuremath{\operatorname{Fix}}}
\newcommand{\Id}{\ensuremath{\operatorname{Id}}}
\newcommand{\sgn}{\ensuremath{\operatorname{sgn}}}
\providecommand{\innp}[1]{\langle#1\rangle}
\crefname{equation}{}{equations}
\crefname{chapter}{Appendix}{chapters}
\crefname{item}{}{items}
\crefname{figure}{Figure}{figures}
\def\namedlabel#1#2{\begingroup
   \def\@currentlabel{#2}%
   \label{#1}\endgroup
}
\def\th@plain{%
  \thm@notefont{}
  \itshape 
}
\def\th@definition{%
  \thm@notefont{}
  \normalfont 
}
\newtheorem{theorem}{Theorem}[section]
\newtheorem{lemma}[theorem]{Lemma}
\newtheorem{corollary}[theorem]{Corollary}
\newtheorem{proposition}[theorem]{Proposition}
\newtheorem{fact}[theorem]{Fact}
\theoremstyle{definition}
\theoremstyle{definition}
\newtheorem{example}[theorem]{Example}
\theoremstyle{definition}
\newtheorem{remark}[theorem]{Remark}
\begin{document}

\title{On Fej\'er monotone sequences\\ and nonexpansive
mappings}

\author{
Heinz H.\ Bauschke\thanks{
Mathematics, University of British Columbia, Kelowna, B.C.\ V1V~1V7, Canada. 
E-mail: \texttt{heinz.bauschke@ubc.ca}.},~
Minh N.\ Dao\thanks{
Mathematics, University of British Columbia, Kelowna, B.C.\ V1V~1V7, Canada,
and
Department of Mathematics and Informatics,
Hanoi National University of Education, 136 Xuan Thuy, Hanoi, Vietnam.
E-mail: \texttt{minhdn@hnue.edu.vn}.}, 
~and Walaa M.\ Moursi\thanks{
Mathematics, University of British Columbia, Kelowna, B.C.\ V1V~1V7, Canada,
and 
Mansoura University, Faculty of Science, Mathematics Department, 
Mansoura 35516, Egypt. 
E-mail: \texttt{walaa.moursi@ubc.ca}.}
}

\date{July 20, 2015}

\maketitle

\begin{abstract}
\noindent
The notion of Fej\'er monotonicity has proven to be a fruitful concept in
fixed point theory and optimization. In this paper, we present
new conditions sufficient for convergence of Fej\'er monotone
sequences and we also provide applications to the study of
nonexpansive mappings. Various examples illustrate our results.
\end{abstract}
{\small
\noindent
{\bfseries 2010 Mathematics Subject Classification:}
{Primary 
47H09; 
Secondary 
47H05, 
90C25. 
}

\noindent {\bfseries Keywords:}
asymptotically regular sequence,
Fej\'er monotone sequence,
nonexpansive mapping
}

\section{Introduction}
We assume throughout the paper that
\begin{empheq}[box=\mybluebox]{equation}
X \text{~~is a real Hilbert space}
\end{empheq} 
with inner product $\scal{\cdot}{\cdot}$ and induced norm $\|\cdot\|$.
Let $C$ be a nonempty closed convex subset of $X$. 
A sequence $(x_n)_\nnn$ in $X$ is called \emph{Fej\'er monotone} 
(see, e.g., \cite{Comb01:1}, \cite{Comb01:2} and \cite{BC01})
with respect to $C$ if
\begin{equation}
(\forall c \in C)(\forall\nnn)\quad \norm{x_{n+1} -c} \leq \norm{x_n -c}.
\end{equation}
In other words, each point in a Fej\'er monotone sequence is not further from
any point in $C$ than its predecessor. 
This property has known to be an efficient tool 
to analyze various iterative algorithms in convex optimization.

\emph{The goal of this paper is to present some new conditions
sufficient for convergence of Fej\'er monotone sequences. We also
provide applications to the study of nonexpansive mappings.}

The paper is organized as follows.
In Section~2, we deal with Fej\'er monotonicity.
Section~3 is devoted to applications in fixed point theory.
Section~4 concludes the paper with a list of open problems.

The notation we employ is standard and follows, e.g.,
\cite{BC11}. 

\section{Fej\'er monotonicity}
We start by recalling some pleasant properties of Fej\'er
monotone sequences.

\begin{fact}
\label{f:basic}
Let $(x_n)_\nnn$ be a sequence in $X$ that is
is Fej\'er monotone with respect to a nonempty closed convex
subset $C$ of $X$. 
Then the following hold:
\vspace{-0.4cm}
\begin{enumerate}
\item 
\label{f:basic_bounded}
The sequence $(x_n)_\nnn$ is bounded.
\item
\label{f:basic_norm}
For every $c\in C$, the sequence $(\norm{x_n -c})_\nnn$ converges.
\item
\label{f:basic_norm+}
The set of strong cluster points of $(x_n)_\nnn$ lies in a sphere of $X$. 
\item
\label{f:basic_shadow}
The ``shadow sequence'' 
$(P_C x_n)_\nnn$ converges strongly to a point in $C$.
\item
\label{f:basic_int}
If $\inte C \neq\varnothing$, then $(x_n)_\nnn$ converges
strongly to a point in $X$.
\item
\label{f:basic_affine}
If $C$ is a closed affine subspace of $X$, then 
$(\forall\nnn)$ $P_Cx_n=P_Cx_0$. 
\item
\label{f:basic_weakstrong}
Every weak cluster point of $(x_n)_\nnn$ that belongs to $C$ must
be 
$\lim_{n\to\infty} P_Cx_n$. 
\item
\label{f:basic_weakchar}
The sequence $(x_n)_\nnn$ converges weakly to some point in $C$ if and only if all
weak cluster points of $(x_n)_\nnn$ lie in $C$. 
\item
\label{f:basic_weakchar2}
If all weak cluster points of $(x_n)_\nnn$ lie in $C$,
then $(x_n)_\nnn$ converges weakly to $\lim_{n\to\infty}P_Cx_n$. 
\end{enumerate}
\end{fact}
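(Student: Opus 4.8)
The plan is to establish the nine assertions in essentially the listed order, letting the early items feed the later ones. Items \cref{f:basic_bounded} and \cref{f:basic_norm} are immediate from the definition: fixing any $c\in C$, the scalar sequence $(\norm{x_n-c})_\nnn$ is nonincreasing and bounded below by $0$, hence convergent, and evaluating at a single $c$ already bounds $(x_n)_\nnn$. For \cref{f:basic_norm+} I would fix one $c_0\in C$ and note that if $x$ is a strong cluster point, then $\norm{x-c_0}=\lim_n\norm{x_n-c_0}$ by \cref{f:basic_norm}; hence every strong cluster point lies on the sphere centered at $c_0$ with this common radius.

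The analytic core is \cref{f:basic_shadow}. Writing $p_n:=P_Cx_n$, I would first observe that $(\norm{x_n-p_n})_\nnn$ is nonincreasing: for $m\ge n$, minimality of $p_m$ and Fej\'er monotonicity give $\norm{x_m-p_m}\le\norm{x_m-p_n}\le\norm{x_n-p_n}$ since $p_n\in C$, so this sequence converges. The projection inequality $\scal{x_m-p_m}{p_n-p_m}\le 0$ then yields $\norm{p_m-p_n}^2\le\norm{x_m-p_n}^2-\norm{x_m-p_m}^2\le\norm{x_n-p_n}^2-\norm{x_m-p_m}^2$, whose right-hand side tends to $0$; thus $(p_n)_\nnn$ is Cauchy and converges to a point of $C$ by completeness and closedness.

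For \cref{f:basic_int} I would exploit a point $c_0\in\inte C$ and a radius $\rho>0$ with $\ball{c_0}{\rho}\subseteq C$: testing Fej\'er monotonicity at $c_0+\rho u$, where $u$ is the unit vector along $x_n-x_m$, and expanding gives $2\rho\norm{x_n-x_m}\le\norm{x_n-c_0}^2-\norm{x_m-c_0}^2$, whose right-hand side vanishes in the limit by \cref{f:basic_norm}, so $(x_n)_\nnn$ is Cauchy. For \cref{f:basic_affine}, writing $C=a+V$ with $V$ the direction subspace, I would test Fej\'er monotonicity at $p_n+tv$ for $v\in V$ and $t\in\RR$; using $x_n-p_n\perp V$, the resulting inequality is affine in $t$ and can hold for all $t$ only if $\scal{x_{n+1}-p_n}{v}=0$, forcing $x_{n+1}-p_n\perp V$. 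Hence $P_Cx_{n+1}=p_n$, and induction gives $P_Cx_n=P_Cx_0$.

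The remaining items are short. For \cref{f:basic_weakstrong}, if $x_{n_k}\rightharpoonup x\in C$ and $p=\lim_n P_Cx_n$, then passing to the limit in the projection inequality $\scal{x_{n_k}-p_{n_k}}{x-p_{n_k}}\le 0$ (weak convergence of the first factor, strong convergence of the second via \cref{f:basic_shadow}) yields $\norm{x-p}^2\le 0$, so $x=p$. Finally \cref{f:basic_weakchar} and \cref{f:basic_weakchar2} follow by combining boundedness (\cref{f:basic_bounded}) with \cref{f:basic_weakstrong}: if every weak cluster point lies in $C$ then all of them coincide with $p$, and a bounded sequence in a Hilbert space with a single weak cluster point converges weakly to it; the converse direction in \cref{f:basic_weakchar} is trivial, since a weak limit is the unique weak cluster point. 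I expect the main obstacle to be the Cauchy estimate in \cref{f:basic_shadow}, where Fej\'er monotonicity and the projection inequality must be combined in just the right order so that the telescoping bound collapses.
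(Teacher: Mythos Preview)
Your argument is correct in every part; each of the nine items is handled cleanly, and the chain of implications (using \cref{f:basic_norm} for \cref{f:basic_norm+} and \cref{f:basic_int}, using \cref{f:basic_shadow} for \cref{f:basic_weakstrong}, and using \cref{f:basic_bounded} plus \cref{f:basic_weakstrong} for \cref{f:basic_weakchar}--\cref{f:basic_weakchar2}) is exactly the right structure. The Cauchy estimate for the shadow sequence and the ``affine-in-$t$'' trick for \cref{f:basic_affine} are both valid as written. The only cosmetic point is that in \cref{f:basic_int} you should note that the case $x_n=x_m$ is trivial before choosing the unit vector $u$.

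The paper itself does not prove this Fact at all: its ``proof'' is a one-line list of citations to \cite{BC11} (Propositions~5.4, 5.7, 5.9, 5.10, Theorem~5.5, Corollary~5.11) together with the remark that \cref{f:basic_norm+} is clear from \cref{f:basic_norm} and that \cref{f:basic_weakchar2} follows by combining \cref{f:basic_weakchar} with \cref{f:basic_weakstrong}. So your route is genuinely different in that it is fully self-contained, whereas the paper delegates the work to the textbook. What the paper's approach buys is brevity and a pointer to a canonical source; what yours buys is that a reader sees the mechanism---in particular the projection inequality driving \cref{f:basic_shadow} and \cref{f:basic_weakstrong}---without having to open \cite{BC11}. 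The underlying mathematics is, unsurprisingly, the same as in the cited propositions.
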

\begin{proof}
\ref{f:basic_bounded}\&\ref{f:basic_norm}: \cite[Proposition~5.4]{BC11}. 
\ref{f:basic_norm+}: Clear from \ref{f:basic_norm}.  
\ref{f:basic_shadow}: \cite[Proposition~5.7]{BC11}.
\ref{f:basic_int}: \cite[Proposition~5.10]{BC11}.
\ref{f:basic_affine}: \cite[Proposition~5.9(i)]{BC11}. 
\ref{f:basic_weakstrong}: This follows from \cite[Corollary~5.11]{BC11}. 
\ref{f:basic_weakchar}: This follows from \cite[Theorem~5.5]{BC11}. 
\ref{f:basic_weakchar2}: Combine \ref{f:basic_weakchar} with
\ref{f:basic_weakstrong}. 
\end{proof}

The following result was first presented in 
\cite[Theorem~6.2.2(ii)]{Bau96}; for completeness,
we include its short proof. 

\begin{lemma}\label{f:basic_cluster}
Let $(x_n)_\nnn$ be a sequence in $X$ that is
is Fej\'er monotone with respect to a nonempty closed convex
subset $C$ of $X$. 
Let $w_1$ and $w_2$ be weak cluster points of $(x_n)_\nnn$.
Then $w_1 -w_2 \in (C -C)^\perp$.
\end{lemma}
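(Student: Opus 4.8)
The plan is to reduce the claim to showing that, for every pair $c,c'\in C$, the scalar sequence $\scal{x_n}{c-c'}$ converges to a limit that does not depend on the particular weak cluster point tested against it. Since every element of $C-C$ is of the form $c-c'$ with $c,c'\in C$, establishing $\scal{w_1-w_2}{c-c'}=0$ for all such pairs is precisely the assertion $w_1-w_2\in(C-C)^\perp$.

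First I would fix $c,c'\in C$ and invoke Fact~\ref{f:basic}\ref{f:basic_norm}, which guarantees that both $(\norm{x_n-c})_\nnn$ and $(\norm{x_n-c'})_\nnn$ converge. Expanding the squared norms gives the identity
\begin{equation*}
\norm{x_n-c}^2-\norm{x_n-c'}^2 = -2\scal{x_n}{c-c'}+\norm{c}^2-\norm{c'}^2,
\end{equation*}
whose left-hand side therefore converges. Consequently $(\scal{x_n}{c-c'})_\nnn$ converges; denote its limit by $\lambda$.

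Next I would read off this limit along the two cluster points. Choosing a subsequence $(x_{k_n})_\nnn$ with $x_{k_n}\rightharpoonup w_1$, weak convergence yields $\scal{x_{k_n}}{c-c'}\to\scal{w_1}{c-c'}$, while the convergence of the full sequence already forces $\scal{x_{k_n}}{c-c'}\to\lambda$; hence $\scal{w_1}{c-c'}=\lambda$. Running the identical argument along a subsequence converging weakly to $w_2$ gives $\scal{w_2}{c-c'}=\lambda$ as well. Subtracting, $\scal{w_1-w_2}{c-c'}=0$, and since $c,c'$ were arbitrary in $C$, the conclusion follows.

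The argument is essentially mechanical, so I do not anticipate a serious obstacle; the only point requiring care is the observation that convergence of the \emph{distances} $\norm{x_n-c}$ to all $c\in C$ — rather than convergence of $(x_n)_\nnn$ itself — already pins down the inner products $\scal{x_n}{c-c'}$, which are then continuous under weak convergence and hence take the same value on every weak cluster point.
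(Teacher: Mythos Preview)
Your proof is correct and follows essentially the same approach as the paper: both use \cref{f:basic}\ref{f:basic_norm} to obtain convergence of the distances to two arbitrary points of $C$, expand the squared norms to deduce convergence of $\scal{x_n}{c-c'}$ (the paper writes $\|x_n-c_1\|^2$ in terms of $\|x_n-c_2\|^2$ plus a cross term, you instead subtract the two expansions directly, but this is the same identity), and then pass to the limit along the two weakly convergent subsequences and subtract.
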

\begin{proof}
Let $(c_1,c_2)\in C\times C$. Using
\cref{f:basic}\ref{f:basic_norm}, set 
$L_i:=\lim_{n\to \infty}\norm{x_n-c_i}$, for $i\in \{1,2\}$.
Note that 
\begin{equation}
\label{eq:2:sub}
\norm{x_n-c_1}^2=\norm{x_n-c_2}^2
+\norm{c_1-c_2}^2+2\innp{x_2-c_2,c_2-c_1}.
\end{equation}
Now suppose that $x_{k_n}\rightharpoonup w_1$
and $x_{l_n}\rightharpoonup w_2$.
Taking the limit in \cref{eq:2:sub} along the two subsequences 
$(k_n)_\nnn$ and $(l_n)_\nnn$ yields 
$L_1=L_2+\norm{c_2-c_1}^2+2\innp{w_1-w_2,c_2-c_1}$
and 
$L_1=L_2+\norm{c_2-c_1}^2+2\innp{w_2-w_2,c_2-c_1}$.
Subtracting the last two equations yields
$2\scal{c_2-c_1}{w_1-w_2}=0$. 
\end{proof}

We are now ready for our first result 
which can be seen as a finite-dimensional variant of 
\cite[Lemma~2.1]{BDM15} (where $A$ is a closed linear subspace)
and 
\cref{f:basic}\ref{f:basic_weakchar} (where $A=X$). 

\begin{proposition}
\label{p:shadow}
Suppose that $X$ is finite-dimensional,
let $(x_n)_\nnn$ be a sequence in $X$ that is
Fej\'er monotone with respect to a nonempty closed convex
subset $C$ of $X$, and let $A$ be a closed convex subset of $X$
such that $C\subseteq A$.
If all cluster points of $(P_Ax_n)_\nnn$ lie in $C$,
then $(P_A x_n)_\nnn$ converges; in fact, 
\begin{equation}
\label{e:level3done}
\lim_{n\to \infty}P_A x_n
=\lim_{n\to \infty}P_C x_n.
\end{equation}
\end{proposition}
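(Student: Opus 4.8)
The plan is to exploit that, in finite dimensions, a bounded sequence converges as soon as all of its cluster points coincide. Since $(x_n)_\nnn$ is bounded by \cref{f:basic}\ref{f:basic_bounded} and $P_A$ is nonexpansive, the shadow $(P_Ax_n)_\nnn$ is bounded and hence possesses cluster points; it therefore suffices to show that every such cluster point equals $\bar c:=\lim_{n\to\infty}P_Cx_n$, which exists by \cref{f:basic}\ref{f:basic_shadow}. Once this is done, \cref{e:level3done} follows at once. The whole argument reduces to a comparison of limiting distances.

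First I would record two preliminary limits. Writing $L_{\bar c}:=\lim_{n\to\infty}\norm{x_n-\bar c}$, which exists by \cref{f:basic}\ref{f:basic_norm} since $\bar c\in C$, the convergence $P_Cx_n\to\bar c$ together with $\big|\norm{x_n-P_Cx_n}-\norm{x_n-\bar c}\big|\le\norm{P_Cx_n-\bar c}\to 0$ gives $\norm{x_n-P_Cx_n}\to L_{\bar c}$. Moreover, because $C\subseteq A$, the distance to the larger set is smaller, so $\norm{x_n-P_Ax_n}\le\norm{x_n-P_Cx_n}$ for every $n$.

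Next comes the main comparison. Let $p$ be any cluster point of $(P_Ax_n)_\nnn$, say $P_Ax_{k_n}\to p$; by hypothesis $p\in C$, so $L_p:=\lim_{n\to\infty}\norm{x_n-p}$ exists by \cref{f:basic}\ref{f:basic_norm}. The key transfer step passes convergence of the shadow to the distance sequence: since $\big|\norm{x_{k_n}-P_Ax_{k_n}}-\norm{x_{k_n}-p}\big|\le\norm{P_Ax_{k_n}-p}\to 0$ and $\norm{x_{k_n}-p}\to L_p$, we obtain $\norm{x_{k_n}-P_Ax_{k_n}}\to L_p$. Combined with the inequality of the previous paragraph (taken along the subsequence) this yields $L_p\le L_{\bar c}$. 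For the reverse inequality I would expand, using the projection characterization $\scal{x_n-P_Cx_n}{p-P_Cx_n}\le 0$,
\begin{equation*}
\norm{x_n-p}^2=\norm{x_n-P_Cx_n}^2+2\scal{x_n-P_Cx_n}{P_Cx_n-p}+\norm{P_Cx_n-p}^2\ge\norm{x_n-P_Cx_n}^2+\norm{P_Cx_n-p}^2,
\end{equation*}
and let $n\to\infty$ to get $L_p^2\ge L_{\bar c}^2+\norm{\bar c-p}^2$. Hence $L_p\ge L_{\bar c}$, so in fact $L_p=L_{\bar c}$, which forces $\norm{\bar c-p}=0$, i.e.\ $p=\bar c$. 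Since the bounded sequence $(P_Ax_n)_\nnn$ thus has $\bar c$ as its only cluster point, it converges to $\bar c=\lim_{n\to\infty}P_Cx_n$, establishing \cref{e:level3done}.

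The main obstacle, and the only place where care is genuinely needed, is the transfer step: the iterates $x_{k_n}$ themselves need not converge, only their shadows $P_Ax_{k_n}$ do, so one cannot argue with $x_{k_n}$ directly and must instead route the convergence through the scalar distances $\norm{x_{k_n}-P_Ax_{k_n}}$. Finite-dimensionality enters twice, guaranteeing both that the shadow has cluster points at all and, via the hypothesis, that each such cluster point lands in $C$, where Fej\'er monotonicity makes $\norm{x_n-p}$ convergent.
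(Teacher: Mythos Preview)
Your proof is correct, but it follows a genuinely different route from the paper's. The paper's argument is shorter: given a cluster point $c$ of $(P_Ax_n)_\nnn$, it passes to a \emph{further} subsequence along which the iterates themselves converge, say $x_{k_n}\to x$ (this is where finite-dimensionality is used); then by continuity $P_Ax_{k_n}\to P_Ax=c\in C$, and since $C\subseteq A$ with $P_Ax\in C$ one has $P_Cx=P_Ax=c$, whence $c^*\leftarrow P_Cx_{k_n}\to P_Cx=c$. Your approach deliberately avoids extracting a convergent subsequence of $(x_{k_n})_\nnn$ and instead compares the limiting distances $L_p$ and $L_{\bar c}$ via the projection inequality, which is more computational but self-contained. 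What the paper's route buys is brevity and a clean use of the implication ``$P_Ax\in C\Rightarrow P_Cx=P_Ax$''; what your route buys is that it never appeals to compactness of $(x_n)_\nnn$ itself, so the remark in your last paragraph that ``one cannot argue with $x_{k_n}$ directly'' is not quite accurate here---in finite dimensions one can, and the paper does.
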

\begin{proof}
Set $c^* := \lim_{n\to\infty} P_Cx_n$ (see
\cref{f:basic}\ref{f:basic_shadow}). 
By \cref{f:basic}\ref{f:basic_bounded}, $(x_n)_\nnn$ is bounded,
hence so is $(P_Ax_n)_\nnn$ because $P_A$ is nonexpansive.
Now assume that all cluster points of $(P_Ax_n)_\nnn$ 
lie in $C$. 
Let $c$ be an arbitrary cluster point of $(P_Ax_n)_\nnn$. 
Then there exist a subsequence $(x_{k_n})_\nnn$ of $(x_n)_\nnn$
and a point $x\in X$ such that 
$x_{k_n} \to x$ and $P_A x_{k_n} \to P_A x =c \in C$.
It follows that $c^* \leftarrow P_Cx_{k_n} \to P_Cc = c$.
Hence $c=c^*$ and the result follows. 
\end{proof}

Our second result decouples Fej\'er monotonicity into two
properties in the case when
the underlying set can be written as the sum of a set and a cone.

\begin{proposition}
\label{p:sum}
Let $(x_n)_\nnn$ be a sequence in $X$, 
let $E$ be a nonempty subset of $X$ 
and let $K$ be a nonempty convex cone of $X$.
Then the following are equivalent:
\vspace{-0.4cm}
\begin{enumerate}
\item 
\label{p:sum_CK}
$(x_n)_\nnn$ is Fej\'er monotone with respect to $E +K$.
\item
\label{p:sum_C}
$(x_n)_\nnn$ is Fej\'er monotone with respect to $E$ and
$(\forall\nnn)$ $x_{n+1} \in x_n +K^\oplus$, where
$K^\oplus := \menge{u\in X}{\inf\scal{u}{K}\geq 0}$. 
\end{enumerate}
\end{proposition}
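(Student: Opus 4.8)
The plan is to rewrite Fej\'er monotonicity with respect to an arbitrary test set in a convenient squared form and then exploit the homogeneity of the cone $K$. Since both sides of a Fej\'er inequality are nonnegative, squaring is harmless, so for fixed $n$ and fixed $s$ the inequality $\norm{x_{n+1}-s}\leq\norm{x_n-s}$ is equivalent, after expanding and cancelling $\norm{s}^2$, to the \emph{linear} inequality
\begin{equation}
\norm{x_{n+1}}^2-\norm{x_n}^2\leq 2\scal{x_{n+1}-x_n}{s}.
\end{equation}
Abbreviating $v_n:=x_{n+1}-x_n$ and $\alpha_n:=\norm{x_{n+1}}^2-\norm{x_n}^2$, Fej\'er monotonicity with respect to a set $S$ reads: for all $n$ and all $s\in S$, $\alpha_n\leq 2\scal{v_n}{s}$. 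In the same language, the condition $x_{n+1}\in x_n+K^\oplus$ becomes $\scal{v_n}{k}\geq 0$ for every $k\in K$.

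The implication \ref{p:sum_C}$\Rightarrow$\ref{p:sum_CK} is then immediate. Given $e\in E$ and $k\in K$, Fej\'er monotonicity with respect to $E$ supplies $\alpha_n\leq 2\scal{v_n}{e}$, while $v_n\in K^\oplus$ gives $\scal{v_n}{k}\geq 0$; adding these yields $\alpha_n\leq 2\scal{v_n}{e}+2\scal{v_n}{k}=2\scal{v_n}{e+k}$, which is exactly Fej\'er monotonicity with respect to $E+K$.

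For the reverse implication \ref{p:sum_CK}$\Rightarrow$\ref{p:sum_C}, fix $n$ and start from the hypothesis $\alpha_n\leq 2\scal{v_n}{e}+2\scal{v_n}{k}$, valid for all $e\in E$ and all $k\in K$. To establish $v_n\in K^\oplus$, fix $k\in K$ and any $e\in E$; since $K$ is a cone, $\lambda k\in K$ for every $\lambda>0$, so $\alpha_n\leq 2\scal{v_n}{e}+2\lambda\scal{v_n}{k}$. Were $\scal{v_n}{k}$ negative, letting $\lambda\to+\infty$ would send the right-hand side to $-\infty$, which is impossible; hence $\scal{v_n}{k}\geq 0$. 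To recover Fej\'er monotonicity with respect to $E$, fix $e\in E$, pick any $k_0\in K$ (using that $K$ is nonempty), and apply the hypothesis with $k=\lambda k_0\in K$; letting $\lambda\to 0^+$ makes the term $2\lambda\scal{v_n}{k_0}$ vanish and leaves $\alpha_n\leq 2\scal{v_n}{e}$.

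The only delicate point is this last step. It is tempting to simply substitute $k=0$, but the definition of a convex cone need not guarantee $0\in K$, so that shortcut is unavailable. I expect this to be the main thing to get right, and the homogeneity of $K$ resolves it cleanly: scaling a fixed element of $K$ toward the origin drives the cone contribution to zero without ever requiring $0\in K$.
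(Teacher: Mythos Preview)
Your argument is correct and follows essentially the same route as the paper: both rewrite the Fej\'er inequality in squared form to obtain the identity (in the paper's notation $\Delta_n(e+k)=\Delta_n(e)+2\scal{x_{n+1}-x_n}{k}$, in yours $2\scal{v_n}{e+k}-\alpha_n=(2\scal{v_n}{e}-\alpha_n)+2\scal{v_n}{k}$), and both exploit the positive homogeneity of $K$ to force $\scal{v_n}{k}\geq 0$.

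The one genuine difference is exactly the point you flag. To recover Fej\'er monotonicity with respect to $E$ from \ref{p:sum_CK}, the paper simply writes ``$E\subseteq E+K$'', which tacitly uses $0\in K$; you instead send $\lambda k_0\to 0$ with $k_0\in K$. Under the convention that a convex cone need not contain the origin (e.g., $K=\RPP$ in $\RR$), your limiting argument is the more careful one and patches a small gap in the paper's presentation, at no extra cost.
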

\begin{proof}
Set 
\begin{equation}
\label{e:0718a}
(\forall x \in X)(\forall\nnn)\quad \Delta_n(x) :=\norm{x_n -x}^2 -\norm{x_{n+1} -x}^2.
\end{equation}
Then for every $e \in E$ and $k \in K$, we have
\begin{subequations}
\label{e:diff}
\begin{align}
\Delta_n(e +k) &=\norm{x_n -e}^2 +\norm{k}^2 -2\scal{x_n -e}{k} 
-\big(\norm{x_{n+1} -e}^2 +\norm{k}^2 -2\scal{x_{n+1} -e}{k}\big) \\ 
&=\Delta_n(e) +2\scal{x_{n+1} -x_n}{k}. 
\end{align}
\end{subequations}
Assume first that \ref{p:sum_CK} holds. 
Then
$(x_n)_\nnn$ is Fej\'er monotone with respect to $E$ because 
$E \subseteq E +K$.
Let $(e,k)\in E\times K$ and $\nnn$.
Using \eqref{e:0718a},
\begin{equation}
0 \leq \Delta_n(e +k) 
=\Delta_n(e) +2\scal{x_{n+1} -x_n}{k}. 
\end{equation}
Since $K$ is a cone, this shows that
$2\inf\scal{x_{n+1}-x_n}{\RPP k} \geq -\Delta_n(e)>-\infty$.
Hence $\scal{x_{n+1}-x_n}{k}\geq 0$.
It follows that $x_{n+1}-x_n\in K^\oplus$. 
Conversely, if \ref{p:sum_C} holds,
then 
\cref{e:diff} immediately yields \ref{p:sum_CK}.
\end{proof}

The following consequence of \cref{p:sum} shows that \cref{p:sum}
is a generalization of \cref{f:basic}\ref{f:basic_affine}. 

\begin{corollary}
Let $(x_n)_\nnn$ be a sequence in $X$, and let $C$ be a closed
affine subspace of $X$, say $C = c + Y$, where $Y$ is a closed
linear subspace of $X$.
Then $(x_n)_\nnn$ is Fej\'er monotone with respect to $Y$ if and
only if $(\forall\nnn)$ $\|x_{n+1}-c\|\leq \|x_n-c\|$ and 
$x_{n+1}\in x_n+Y^\perp$, in which case
$(P_Cx_n)_\nnn$ is a constant sequence. 
\end{corollary}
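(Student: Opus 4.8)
The plan is to obtain this as an immediate specialization of \cref{p:sum}, the point being to choose the decomposition of the underlying set correctly. Since $Y$ is a closed linear subspace it is in particular a nonempty convex cone, and the singleton $\{c\}$ is a nonempty subset of $X$; so I would apply \cref{p:sum} with $E:=\{c\}$ and $K:=Y$. Because then $E+K=c+Y=C$, the proposition says that $(x_n)_\nnn$ is Fej\'er monotone with respect to $C$ if and only if $(x_n)_\nnn$ is Fej\'er monotone with respect to $\{c\}$ and $(\forall\nnn)$ $x_{n+1}\in x_n+Y^\oplus$. It then remains only to rewrite these two conditions in the stated form.

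The first condition is cosmetic: Fej\'er monotonicity with respect to the singleton $\{c\}$ is, by the very definition, the requirement that $(\forall\nnn)$ $\norm{x_{n+1}-c}\le\norm{x_n-c}$. The one genuine computation — and the step I would flag as the crux — is the identity $Y^\oplus=Y^\perp$, which converts the cone condition $x_{n+1}\in x_n+Y^\oplus$ into the orthogonality condition $x_{n+1}\in x_n+Y^\perp$. To prove it I would argue by the two inclusions, using that $Y$ is a subspace: if $u\in Y^\perp$ then $\scal{u}{y}=0$ for all $y\in Y$, so $\inf\scal{u}{Y}=0\ge0$ and $u\in Y^\oplus$; conversely, if $u\notin Y^\perp$, I would pick $y\in Y$ with $\scal{u}{y}\neq0$ and note that, since $\lambda y\in Y$ for every $\lambda\in\RR$, the value $\scal{u}{\lambda y}=\lambda\scal{u}{y}$ is unbounded below, whence $\inf\scal{u}{Y}=\minf$ and $u\notin Y^\oplus$. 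Substituting $Y^\oplus=Y^\perp$ delivers the claimed equivalence (with the Fej\'er set realized as $C=\{c\}+Y$).

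For the closing assertion, once the equivalent conditions hold we know $(x_n)_\nnn$ is Fej\'er monotone with respect to the closed affine subspace $C$, so I would simply invoke \cref{f:basic}\ref{f:basic_affine} to conclude $(\forall\nnn)$ $P_Cx_n=P_Cx_0$, i.e.\ that $(P_Cx_n)_\nnn$ is constant. Alternatively this is visible directly from the perpendicularity condition: writing $P_C=c+P_Y(\,\cdot\,-c)$ and using that $P_Y$ is linear with $P_Y(Y^\perp)=\{0\}$, the relation $x_{n+1}-x_n\in Y^\perp$ forces $P_Cx_{n+1}=P_Cx_n$. I do not expect any real difficulty here; the main obstacle is conceptual rather than technical, namely recognizing that the right input to \cref{p:sum} is the splitting $C=\{c\}+Y$, after which the subspace identity $Y^\oplus=Y^\perp$ is the only calculation that carries content.
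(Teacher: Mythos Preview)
Your proposal is correct and follows exactly the route the paper intends: the corollary is stated without proof, prefaced only by ``The following consequence of \cref{p:sum}\ldots'', and your derivation via $E=\{c\}$, $K=Y$, the identity $Y^\oplus=Y^\perp$, and the appeal to \cref{f:basic}\ref{f:basic_affine} is precisely that consequence spelled out. You also implicitly corrected what is evidently a typo in the printed statement---it reads ``Fej\'er monotone with respect to $Y$'' where it must mean ``with respect to $C$''---and your argument treats the corrected version.
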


We continue with the following lemma, 
which is a slight generalization of a theorem of Ostrowski 
(see \cite[Theorem 26.1]{Ost73}) whose
proof we follow.

\begin{lemma}
\label{l:connect}
Let $(Y, d)$ be a metric space, 
and let $(x_n)_\nnn$ be a sequence in a compact subset $C$ of $Y$
such that $d(x_n, x_{n+1}) \to 0$.
Then the set of cluster points of $(x_n)_\nnn$ 
is a compact connected subset of $C$.
\end{lemma}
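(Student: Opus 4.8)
The plan is to treat the two assertions separately, dealing with compactness quickly and devoting the bulk of the argument to connectedness. Write $L$ for the set of cluster points of $(x_n)_\nnn$. Since $C$ is compact it is in particular closed, so every cluster point lies in $C$ and hence $L\subseteq C$; as the cluster set of a sequence is always closed, $L$ is a closed subset of the compact set $C$ and therefore itself compact, and it is nonempty because compactness of $C$ furnishes a convergent subsequence. It remains to prove that $L$ is connected, which is where the hypothesis $d(x_n,x_{n+1})\to 0$ enters.

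For connectedness I would argue by contradiction. Suppose $L=L_1\cup L_2$ with $L_1,L_2$ nonempty, disjoint, and closed in $L$. Since $L$ is compact, both $L_1$ and $L_2$ are compact, and two disjoint nonempty compact subsets of a metric space lie a positive distance apart, say $\delta:=d(L_1,L_2)>0$. Thicken them to the open sets $U_i:=\menge{y\in Y}{d(y,L_i)<\delta/3}$ for $i\in\{1,2\}$. Using compactness of $L_1$ and $L_2$ (so the relevant infima are attained) together with the triangle inequality, every $y\in U_1$ and $z\in U_2$ satisfy $d(y,z)\geq \delta-\delta/3-\delta/3=\delta/3$; in particular $U_1$ and $U_2$ are disjoint and separated by distance at least $\delta/3$.

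The engine of the proof is the claim that the tail of $(x_n)_\nnn$ is eventually within $\delta/3$ of $L$, that is, $x_n\in U_1\cup U_2$ for all large $n$. This follows by contradiction: if infinitely many $x_n$ satisfied $d(x_n,L)\geq \delta/3$, then compactness of $C$ would extract from them a subsequence converging to some $z\in C$; this $z$ would be a cluster point, hence in $L$, yet continuity of $d(\cdot,L)$ would force $d(z,L)\geq \delta/3>0$, which is absurd. Combining this with the hypothesis, fix $N$ so that for all $n\geq N$ both $x_n\in U_1\cup U_2$ and $d(x_n,x_{n+1})<\delta/3$. Since consecutive points are closer than $\delta/3$ while $U_1$ and $U_2$ are at distance at least $\delta/3$, a point in $U_i$ can only be followed by a point in $U_i$; by induction the sequence stays in a single $U_i$ from $N$ onward. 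But then every cluster point lies in $\overline{U_i}\subseteq \menge{y\in Y}{d(y,L_i)\leq \delta/3}$, which is disjoint from $L_j$ (as $d(L_j,L_i)=\delta>\delta/3$), forcing $L_j=\varnothing$ and contradicting the splitting. Hence no such splitting exists and $L$ is connected.

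The routine parts are the compactness of $L$ and the separation estimate for $U_1$ and $U_2$; the step I expect to require the most care is the claim that the sequence is eventually trapped within $\delta/3$ of $L$, since it is precisely here that compactness of $C$ and the definition of cluster point must be combined to rule out a subsequence escaping to distance $\geq \delta/3$. Once that claim is in hand, the asymptotic regularity $d(x_n,x_{n+1})\to 0$ finishes the argument by preventing the sequence from ever crossing the gap between $U_1$ and $U_2$.
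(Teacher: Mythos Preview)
Your proof is correct, but it follows a genuinely different route from the paper's. The paper reproduces Ostrowski's classical argument: after fixing $\delta=d(A,B)>0$ and $n_0$ with $d(x_n,x_{n+1})\leq\delta/3$, it starts near a point of $A$ and lets $k$ be the first index at which $d(x_k,B)<2\delta/3$; the predecessor then forces $\delta/3\leq d(x_k,B)<2\delta/3$. Iterating this ``first crossing'' step produces a subsequence trapped in the annular region $\{y:\delta/3\leq d(y,B)\leq 2\delta/3\}$, and any cluster point of that subsequence lies in $S=A\cup B$ yet is too far from $B$ to be in $B$ and too close to $B$ to be in $A$ --- a contradiction. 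Your argument instead first proves the auxiliary claim that the whole tail of $(x_n)_\nnn$ is eventually within $\delta/3$ of $L$, then uses asymptotic regularity to trap the tail in a single neighbourhood $U_i$, forcing the other piece $L_j$ to be empty. Your approach is arguably more transparent (and you also explicitly handle compactness and nonemptiness of $L$, which the paper leaves implicit), while the paper's Ostrowski-style argument avoids the preliminary ``eventual proximity to $L$'' step by working directly with distances to one of the two pieces.
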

\begin{proof}
Denote the set of cluster points of $(x_n)_\nnn$ by $S$
and assume to the contrary that 
$S =A\cup B$ where $A$ and $B$ are nonempty closed 
subsets of $X$ and $A\cap B =\varnothing$.
Then 
\begin{equation}
\label{e:ost:cont}
\delta := \inf_{(a,b)\in A\times B} d(a, b) > 0. 
\end{equation}
By assumption on $(x_n)_\nnn$, there exists 
$n_0\in \NN$  such that $(\forall n \geq n_0)$ 
$d(x_n, x_{n+1}) \leq \delta/3$.
Let $a \in A$. Then there exists $m >n_0$  such that 
$d(x_m, a) <\delta/3$. 
Because $(x_n)_{n>m}$ has a cluster point in $B$, 
there exists a smallest integer $k>m$ such that 
$d(x_k, B) < 2\delta/3$. 
Then $d(x_{k-1}, B) \geq 2\delta/3$ and
hence $d(x_k,B) \geq d(x_{k-1},B)-d(x_{k-1},x_k) \geq
2\delta/3-\delta/3 = \delta/3$.
Thus $\delta/3 \leq d(x_k,B) < 2\delta/3$.
Repeating this argument yields a subsequence 
$(x_{k_n})_\nnn$ of $(x_n)_\nnn$ such that 
$(\forall\nnn)$
$\delta/3 \leq d(x_{k_n},B) < 2\delta/3$. 
Let $x$ be a cluster point of $(x_{k_n})_\nnn$.
It follows that 
\begin{equation}
\delta/3 \leq d(x,B) \leq 2\delta/3
\end{equation}
Obviously, $x\notin B$. 
Hence $x\in A$, and therefore (recall \cref{e:ost:cont}) 
$\delta \leq \delta(x,B) \leq 2\delta/3 < \delta$, which is
absurd. 
\end{proof}

An immediate consequence of Lemma~\ref{l:connect} is the
classical Ostrowski result. 

\begin{corollary}[Ostrowski]
\label{c:connect}
Suppose that $X$ is finite-dimensional and 
let $(x_n)_\nnn$ be a bounded sequence in $X$ such that 
$(x_n)_\nnn$ is \emph{asymptotically regular}, i.e., 
$x_n -x_{n+1} \to 0$.
Then the set of cluster points of $(x_n)_\nnn$ is compact and connected.
\end{corollary}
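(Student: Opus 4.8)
The plan is to deduce this immediately from \cref{l:connect} by exhibiting an appropriate compact set and checking that the asymptotic-regularity hypothesis matches the metric condition. I would take $Y := X$, equipped with the metric $d(x,y) := \norm{x-y}$ induced by the norm. The assumption that $(x_n)_\nnn$ is asymptotically regular says precisely that $d(x_n,x_{n+1}) = \norm{x_n-x_{n+1}} \to 0$, so the distance condition of \cref{l:connect} holds verbatim; nothing further is needed on that front.

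The only place where finite-dimensionality and boundedness enter is in producing a compact set $C$ that contains the whole sequence. Since $(x_n)_\nnn$ is bounded, the set $\menge{x_n}{\nnn}$ is contained in some closed ball $\ball{0}{r}$. I would therefore set $C := \ball{0}{r}$ (or, equivalently, $C := \closu\menge{x_n}{\nnn}$); by the Heine--Borel theorem, closed bounded subsets of the finite-dimensional space $X$ are compact, so $C$ is a compact subset of $Y=X$ containing every term of the sequence. This is the step that genuinely uses $\dim X < \infty$, but it is entirely routine.

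With $Y$, $d$, and $C$ thus specified, all hypotheses of \cref{l:connect} are met, and its conclusion gives at once that the set of cluster points of $(x_n)_\nnn$ is a compact connected subset of $C$, hence in particular compact and connected. I do not expect any real obstacle here: the substantive content, namely Ostrowski's connectedness argument via the intermediate-distance construction, has already been carried out in the proof of \cref{l:connect}. The corollary is simply the observation that a bounded, asymptotically regular sequence in a finite-dimensional space is a special case of the metric-space formulation, and the choice of compact ball is exactly what realizes this specialization.
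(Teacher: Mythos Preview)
Your proposal is correct and matches the paper's approach exactly: the paper simply states that the corollary is an immediate consequence of \cref{l:connect}, and you have filled in precisely the routine details (Heine--Borel to get a compact $C$, and identifying asymptotic regularity with $d(x_n,x_{n+1})\to 0$) that justify this.
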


We are now in position to prove the following key result
which can be seen as a variant of
\cref{f:basic}\ref{f:basic_int}.

\begin{theorem}[a new sufficient condition for convergence]
\label{p:codim1}
Suppose that $X$ is finite-dimensional 
and that $C$ is a nonempty closed convex subset of $X$ 
of co-dimension 1, i.e., 
\begin{equation}
\label{e:codim1}
\operatorname{codim}C :=\operatorname{codim}(\aff C -\aff C) =1.
\end{equation}
Let $(x_n)_\nnn$ be a sequence that is Fej\'er monotone with
respect to $C$ and asymptotically regular, i.e., 
$x_n-x_{n+1}\to 0$.
Then $(x_n)_\nnn$ is actually convergent. 
\end{theorem}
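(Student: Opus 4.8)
The plan is to prove that the set $S$ of cluster points of $(x_n)_\nnn$ is a singleton, since a bounded sequence in a finite-dimensional space converges precisely when it has exactly one cluster point. First I would record that $(x_n)_\nnn$ is bounded by \cref{f:basic}\ref{f:basic_bounded}, so $S\neq\varnothing$; combining boundedness with the asymptotic regularity hypothesis $x_n-x_{n+1}\to 0$, \cref{c:connect} (Ostrowski) guarantees that $S$ is compact and \emph{connected}. This connectedness is the topological ingredient that will ultimately do the work.

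Next I would exploit the codimension hypothesis to confine $S$ to a line. Since $X$ is finite-dimensional, weak and strong convergence coincide, so \cref{f:basic_cluster} applies to arbitrary cluster points: any $s_1,s_2\in S$ satisfy $s_1-s_2\in(C-C)^\perp$. Now $\lspan(C-C)=\aff C-\aff C$, hence $(C-C)^\perp=(\aff C-\aff C)^\perp$, and by \cref{e:codim1} this orthogonal complement is one-dimensional, say $(C-C)^\perp=\RR u$ with $u\neq\bzero$. Fixing any $s_0\in S$, it follows that $S\subseteq\menge{s_0+tu}{t\in\RR}$; that is, all cluster points lie on a single line.

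The algebraic heart of the argument is a strict convexity observation. By \cref{f:basic}\ref{f:basic_norm}, for each $c\in C$ the sequence $(\norm{x_n-c})_\nnn$ converges to some limit $L_c$, and passing to a subsequence $x_{k_n}\to s$ defining any cluster point $s$ gives $\norm{s-c}=L_c$; thus every cluster point is equidistant from $c$. Restricting the squared distance to the line through $s_0$, the function $t\mapsto\norm{s_0+tu-c}^2=\norm{s_0-c}^2+2t\scal{u}{s_0-c}+t^2\norm{u}^2$ is a strictly convex quadratic in $t$ (its leading coefficient $\norm{u}^2$ is strictly positive), so it attains the single value $L_c^2$ for at most two values of $t$. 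Since every element of $S$ yields such a $t$, we conclude $\card S\leq 2$.

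Finally I would close the gap between ``at most two'' and ``exactly one'' using connectedness: a nonempty connected subset of a line (indeed of any Hausdorff space) containing at most two points must be a singleton. Hence $\card S=1$ and $(x_n)_\nnn$ converges. I expect the main obstacle to be conceptual rather than computational, namely recognizing that the codimension-$1$ hypothesis is exactly what forces the cluster set onto a line, whereupon two independent constraints---the equidistance/quadratic bound giving $\card S\leq 2$, and the Ostrowski connectedness ruling out the two-point case---must be combined, since neither alone pins down a single limit.
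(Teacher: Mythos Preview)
Your proof is correct and follows essentially the same strategy as the paper: boundedness plus Ostrowski give a nonempty connected cluster set, \cref{f:basic_cluster} with the codimension-$1$ hypothesis confines it to a line, and equidistance from a fixed $c\in C$ (the paper phrases this as ``$S$ lies in a sphere'' via \cref{f:basic}\ref{f:basic_norm+}) forces $\card S\leq 2$, whence connectedness yields a singleton. Your explicit quadratic-in-$t$ computation is just the unpacked version of the sphere-meets-line observation the paper invokes.
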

\begin{proof}
By \cref{f:basic}\ref{f:basic_bounded}, $(x_n)_\nnn$ is bounded.
Denote by $S$ the set of cluster points of $(x_n)_\nnn$.
Since $x_n -x_{n+1} \to 0$, \cref{c:connect} 
implies that $S$ is connected.
Moreover, $S$ lies in a sphere of $X$ 
due to \cref{f:basic}\ref{f:basic_norm+}.
On the other hand, by combining \cref{f:basic_cluster} 
and \cref{e:codim1}, 
$S$ lies in a line of $X$.
Altogether $S$ is a connected subset of 
a sphere that lies on a line. 
We deduce that $S$ is a singleton. 
\end{proof}

We conclude with two examples illustrating
that the assumptions on asymptotic regularity and co-dimension 1
are important.

\begin{example}
Suppose that $X=\RR^2$, set $C=\{0\}\times \RR$, and 
$(\forall \nnn)$ $x_n=((-1)^n,0)$.
Then $\operatorname{codim}C =1$  and $(\forall c\in C)$
$(\forall \nnn)$ $\norm{x_n-c}=\norm{x_{n+1}-c}$,
hence
$(x_n)_\nnn$ is 
Fej\'{e}r monotone with respect to $C$.
However, $(x_n)_\nnn$ does not converge.
This does not contradict \cref{p:codim1} because
$\norm{x_n-x_{n+1}}=2\not \to 0$.
\end{example}

\begin{example}
Suppose that $X=\RR^2$, set $C=\{(0,0)\}\subseteq X$,
and $(\forall\nnn)$
$\theta_n = \sum_{k=1}^n (1/k)$
and 
$x_n = \cos(\theta_n)(1,0) + \sin(\theta_n)(0,1)$.
Then $(x_n)_\nnn$ is asymptotically regular and Fej\'er monotone
with respect to $C$. 
However, the set of cluster points of $(x_n)_\nnn$ is the unit
sphere because the harmonic series diverges. 
Again, this does not contradict
\cref{p:codim1} because
$\operatorname{codim}C =2\neq 1$. 
\end{example}

\section{Asymptotic behaviour of nonexpansive mappings}

From now on, we assume that
\begin{empheq}[box=\mybluebox]{equation}
T\colon X\to X \;\;\text{is nonexpansive.}
\end{empheq} 
Let $x$ and $y$ be in $X$.
It is clear that
$(\forall\nnn)$ $\|T^{n+1}x-T^{n+1}y\|\leq\|T^nx-T^ny\|$ is
bounded. The following question is thus extremely natural:
\begin{equation}
\label{e:question}
\text{Under which conditions on $T$ must $(T^nx-T^ny)_\nnn$
always converge weakly?}
\end{equation}
We first note that \cref{e:question} will impose some restriction
on $T$: 
\begin{example}
\label{ex:-IdonR}
Suppose that $X=\RR$, that $T=-\Id$, that $x\neq 0$ and
that $y=0$. Then the sequence 
$(T^nx-T^ny)_\nnn = ((-1)^nx)_\nnn$ is not convergent. 
\end{example}

The following two results are well known.

\begin{fact}
\label{f:veryknown}
{\rm (See, e.g., \cite[Corollary~6]{Pazy}.)}
Exactly one of the following holds:
\vspace{-0.4cm}
\begin{enumerate}
\item 
$\Fix(T) = \varnothing$ and $(\forall x\in X)$
$\|T^nx\|\to\infty$.
\item
$\Fix(T) \neq \varnothing$ and $(\forall x\in X)$ 
$(T^nx)_\nnn$ is bounded.
\end{enumerate}
\end{fact}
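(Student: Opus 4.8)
The plan is to notice that exactly one of $\Fix(T)=\varnothing$ and $\Fix(T)\neq\varnothing$ holds, so the two alternatives are automatically mutually exclusive and it suffices to show that in the second case every orbit is bounded and in the first case every orbit satisfies $\norm{T^nx}\to\infty$. The easy half is the implication $\Fix(T)\neq\varnothing\Rightarrow$ bounded orbits: picking $p\in\Fix(T)$, nonexpansiveness gives $\norm{T^nx-p}=\norm{T^nx-T^np}\leq\norm{x-p}$ for all $x\in X$ and $\nnn$, so $(T^nx)_\nnn$ is bounded. This yields alternative~(ii), and excludes~(i) since $\Fix(T)\neq\varnothing$.

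The crux is the converse: \emph{if a single orbit $(T^nx_0)_\nnn$ is bounded, then $\Fix(T)\neq\varnothing$}. I would prove this by an asymptotic-center argument. Fix a Banach limit $\mu$ (a shift-invariant positive linear functional on $\ell^\infty$ extending $\lim$) and write $x_n:=T^nx_0$. Since $(x_n)_\nnn$ is bounded, $\varphi(z):=\mu(\norm{x_n-z}^2)$ is well defined on $X$; expanding the square and using linearity of $\mu$ gives $\varphi(z)=\norm{z-\bar x}^2+\beta$, where $\bar x\in X$ is determined by $\scal{\bar x}{w}=\mu(\scal{x_n}{w})$ for all $w\in X$ and $\beta$ is a constant. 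Hence $\varphi$ has the \emph{unique} minimizer $\bar x$. Nonexpansiveness gives $\norm{x_n-T\bar x}^2=\norm{Tx_{n-1}-T\bar x}^2\leq\norm{x_{n-1}-\bar x}^2$, and applying $\mu$ together with its positivity and shift-invariance yields $\varphi(T\bar x)\leq\mu(\norm{x_{n-1}-\bar x}^2)=\varphi(\bar x)$. Uniqueness of the minimizer forces $T\bar x=\bar x$, so $\bar x\in\Fix(T)$. Combined with the easy half, this shows that $\Fix(T)=\varnothing$ forces every orbit to be unbounded.

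It remains to upgrade ``unbounded'' to ``$\norm{T^nx}\to\infty$'' when $\Fix(T)=\varnothing$. Since $c_n:=\norm{T^{n+1}x-T^nx}$ is nonincreasing (by nonexpansiveness), it converges to some $c\geq 0$. If $c=0$, the orbit is asymptotically regular, and if one had $\liminf_n\norm{T^nx}<\infty$, a bounded subsequence $T^{n_k}x\rightharpoonup z$ together with $T^{n_k}x-T^{n_k+1}x\to 0$ and the demiclosedness of $\Id-T$ would give $z\in\Fix(T)$, a contradiction; hence $\norm{T^nx}\to\infty$. The case $c>0$ is the genuine obstacle, and the step I expect to be hardest: here one must show the orbit escapes \emph{linearly}, i.e.\ $\norm{T^nx}/n\to c$. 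I would derive this from the Hilbert-space fact (the technical heart of Pazy's theorem) that the successive increments $T^{n+1}x-T^nx$ become asymptotically aligned — equivalently that $n\mapsto\norm{T^{n+m}x-T^nx}$ is nonincreasing for each fixed $m$, with the resulting subadditive quantity having slope $c$ — which via the parallelogram law rules out cancellation in $T^nx-x=\sum_{k<n}(T^{k+1}x-T^kx)$ and upgrades the norm control $c_n\to c$ to $\norm{T^nx-x}/n\to c$. Granting this alignment lemma, $\norm{T^nx}\geq\norm{T^nx-x}-\norm{x}\to\infty$ completes the argument.
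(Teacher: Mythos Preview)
The paper does not prove this statement; it is recorded as a Fact with a citation to \cite[Corollary~6]{Pazy}, so there is no in-paper argument to compare against. Your reconstruction of the elementary parts is correct: mutual exclusivity is automatic, the implication $\Fix(T)\neq\varnothing\Rightarrow$ bounded orbits is immediate from nonexpansiveness, and your Banach-limit (asymptotic-center) argument that a single bounded orbit forces $\Fix(T)\neq\varnothing$ is standard and valid. The $c=0$ subcase of the upgrade, via demiclosedness of $\Id-T$, is also fine.

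The $c>0$ subcase, however, contains a real gap. You assert that $c>0$ forces $\|T^nx\|/n\to c$ because the increments $T^{n+1}x-T^nx$ become ``asymptotically aligned'' and the subadditive quantity $g(m):=\lim_n\|T^{n+m}x-T^nx\|$ has slope $c$. This heuristic nowhere uses the hypothesis $\Fix(T)=\varnothing$, so if it were valid it would already apply to $T=-\Id$ on $\RR$, where $c=2|x|>0$ yet orbits are bounded. More to the point, take the affine right shift $T\colon\ell^2\to\ell^2$, $Tx=(1,x_1,x_2,\ldots)$: here $\Fix(T)=\varnothing$ and $c=\|x-Tx\|>0$, but the increments $T^{k+1}0-T^k0=e_{k+1}$ (standard unit vectors) are pairwise orthogonal---neither aligned nor cancelling---so $g(m)=\sqrt{m}$ has slope $0$ and $\|T^n0\|=\sqrt{n}$, whence $\|T^nx\|/n\to 0\neq c$. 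The orbit does escape to infinity, but sublinearly and not by the mechanism you propose. Pazy's actual passage from ``$\Fix(T)=\varnothing$'' to ``$\|T^nx\|\to\infty$'' proceeds differently, and that step is precisely the nontrivial content the paper is outsourcing with its citation.
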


\smallskip

\begin{fact}
\label{f:prettyknown}
{\rm (See, e.g., \cite[Theorem~1.2]{BBR78}.)}
Suppose that $\Fix(T)\neq\varnothing$ and let $x\in X$.
Then $(T^nx)_\nnn$ is weakly convergent if and only if 
$T^nx-T^{n+1}x\rightharpoonup 0$; if this is the case, then $(T^nx)_\nnn$
converges weakly to a point in $\Fix(T)$.
\end{fact}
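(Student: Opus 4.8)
The plan is to recognize $(T^nx)_\nnn$ as a Fej\'er monotone sequence and then feed it into the weak-convergence machinery of \cref{f:basic}. First, for every $p\in\Fix T$, nonexpansiveness (together with $Tp=p$) gives $\norm{T^{n+1}x-p}=\norm{T(T^nx)-Tp}\leq\norm{T^nx-p}$, so $(T^nx)_\nnn$ is Fej\'er monotone with respect to the nonempty closed convex set $\Fix T$; in particular it is bounded, and each $(\norm{T^nx-p})_\nnn$ converges. The forward implication is then immediate: if $T^nx\rightharpoonup z$, then the shifted sequence $(T^{n+1}x)_\nnn$ shares the weak limit $z$, and therefore $T^nx-T^{n+1}x\rightharpoonup z-z=0$.

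For the converse, assume $T^nx-T^{n+1}x\rightharpoonup 0$. By \cref{f:basic}\ref{f:basic_weakchar2} it suffices to prove that every weak cluster point of $(T^nx)_\nnn$ belongs to $\Fix T$; once this is known, $(T^nx)_\nnn$ converges weakly to $\lim_{n\to\infty}P_{\Fix T}T^nx\in\Fix T$, which simultaneously settles the final assertion of the statement. So let $T^{n_k}x\rightharpoonup w$. Because the residuals are weakly null, the shift obeys $T^{n_k+1}x=T^{n_k}x-(T^{n_k}x-T^{n_k+1}x)\rightharpoonup w$ as well. Expanding $\norm{T^{n_k+1}x-Tw}^2$ about $w$, the cross term $2\scal{T^{n_k+1}x-w}{w-Tw}$ tends to $0$ since $T^{n_k+1}x\rightharpoonup w$, so the nonexpansive estimate $\norm{T^{n_k+1}x-Tw}^2\leq\norm{T^{n_k}x-w}^2$ rearranges into
\begin{equation*}
\norm{w-Tw}^2\leq\norm{T^{n_k}x-w}^2-\norm{T^{n_k+1}x-w}^2+o(1).
\end{equation*}
This is precisely the demiclosedness principle for $\Id-T$ at $0$: if the right-hand side tends to $0$, then $Tw=w$.

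The main obstacle is controlling $\norm{T^{n_k}x-w}^2-\norm{T^{n_k+1}x-w}^2=\scal{T^{n_k}x-T^{n_k+1}x}{T^{n_k}x+T^{n_k+1}x-2w}$. Under \emph{strong} asymptotic regularity this vanishes at once, since the first factor tends to $0$ in norm while the second stays bounded; this is the route I would follow if $T^nx-T^{n+1}x\to 0$ were available, and it is the cleanest way to see that weak cluster points are fixed points. Under only \emph{weak} asymptotic regularity the inner product of two weakly null factors is uncontrolled, and this is the genuinely delicate point of the statement. Here I would instead exploit the Opial property of the Hilbert norm: applying Opial's inequality to $T^{n_k+1}x\rightharpoonup w$ together with the displayed nonexpansive estimate, a putative cluster point with $Tw\neq w$ would force a strict descent that is incompatible with the convergence of $(\norm{T^nx-p})_\nnn$ guaranteed by Fej\'er monotonicity. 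Once every weak cluster point is shown to lie in $\Fix T$, uniqueness is in fact transparent from \cref{f:basic_cluster}: any two such points differ by a vector lying simultaneously in $\Fix T-\Fix T$ and in $(\Fix T-\Fix T)^\perp$, hence they coincide, and $(T^nx)_\nnn$ converges weakly to a point of $\Fix T$.
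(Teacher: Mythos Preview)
The paper does not supply a proof of this statement at all; it is recorded as a \emph{Fact} with a bare citation to \cite[Theorem~1.2]{BBR78}. So there is no ``paper's own proof'' to compare against, and your task was really to reconstruct a proof from scratch.

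Your forward direction is correct and clean. For the converse, you set things up well: Fej\'er monotonicity of $(T^nx)_\nnn$ with respect to $\Fix T$, the reduction via \cref{f:basic}\ref{f:basic_weakchar2} to showing that every weak cluster point lies in $\Fix T$, and the displayed inequality
\[
\norm{w-Tw}^2 \leq \norm{T^{n_k}x-w}^2 - \norm{T^{n_k+1}x-w}^2 + o(1)
\]
are all fine. You also correctly identify the genuine obstacle: with only \emph{weak} convergence $T^nx-T^{n+1}x\rightharpoonup 0$, Browder's demiclosedness principle is unavailable, and the right-hand side above is an inner product of two merely weakly null sequences.

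Where the proposal falls short is that you do not actually close this gap. The sentence ``a putative cluster point with $Tw\neq w$ would force a strict descent that is incompatible with the convergence of $(\norm{T^nx-p})_\nnn$'' is not a proof: the descent you exhibit is in $\norm{\,\cdot\,-w}$, whereas Fej\'er monotonicity controls $\norm{\,\cdot\,-p}$ for $p\in\Fix T$, and you never connect the two. The missing link is a single polarization identity. Fix any $p\in\Fix T$ and write
\[
\norm{T^{n_k}x-w}^2 - \norm{T^{n_k+1}x-w}^2
= \bigl(\norm{T^{n_k}x-p}^2 - \norm{T^{n_k+1}x-p}^2\bigr)
+ 2\scal{T^{n_k+1}x-T^{n_k}x}{\,w-p\,}.
\]
The first bracket tends to $0$ because $(\norm{T^nx-p})_\nnn$ converges (\cref{f:basic}\ref{f:basic_norm}); the second term tends to $0$ precisely by the hypothesis $T^nx-T^{n+1}x\rightharpoonup 0$, since $w-p$ is a fixed vector. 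Feeding this into your displayed inequality yields $\norm{w-Tw}^2\leq 0$, hence $w\in\Fix T$. This one line is exactly where the weak asymptotic regularity hypothesis does its work, and once it is in place the rest of your argument (via \cref{f:basic}\ref{f:basic_weakchar2} or, equivalently, via \cref{f:basic_cluster}) is complete.
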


To make further progress, let us recall that 
$\cran(\Id-T)$ is a nonempty closed convex set, which makes the
vector 
\begin{empheq}[box=\mybluebox]{equation}
\label{e:v}
v :=P_{\overline{\ran}(\Id -T)}0
\end{empheq} 
well defined (see \cite{BR77}, \cite{BBR78} and \cite{Pazy}), and
which gives rise to the generalized (possibly empty) fixed point set
\begin{equation}
\Fix(v+T) = \menge{x\in X}{x = v+Tx}.
\end{equation}

We now recall the following helpful fact.

\begin{fact}
\label{f:gap}
{\rm (See \cite[Proposition~2.4]{BM15}.)}
Suppose that $\Fix(v+T)\neq\varnothing$.
Then the following hold:
\vspace{-0.4cm}
\begin{enumerate}
\item
\label{f:gap_ray}
$\Fix(v+T) -\RP v \subseteq \Fix(v +T)$. 
\item
\label{f:gap_TFix}
$(\forall y\in \Fix(v+T))(\forall\nnn)$ $T^n y =y -nv$.
\item
\label{f:gap_Fejer}
For every $x\in X$, the sequence $(T^n x +nv)_\nnn$ 
is Fej\'er monotone with respect to $\Fix(v +T)$.
\end{enumerate}
\end{fact}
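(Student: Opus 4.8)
The plan is to prove the three assertions in order, since \ref{f:gap_TFix} and \ref{f:gap_Fejer} will follow quickly once \ref{f:gap_ray} is in hand; the real work is \ref{f:gap_ray}. Throughout I would lean on the variational characterization of $v$ as the projection of $0$ onto $\cran(\Id-T)$. Since $z-Tz\in\ran(\Id-T)\subseteq\cran(\Id-T)$ for every $z\in X$, the obtuse-angle inequality for $P_{\cran(\Id-T)}0$ specializes to
\begin{equation}
\label{e:vchar}
(\forall z\in X)\quad \scal{v}{z-Tz}\geq\norm{v}^2,
\end{equation}
and this is essentially the only property of $v$ I expect to need.

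For \ref{f:gap_ray} I would argue as follows. The case $v=0$ is immediate (then $y-tv=y$), so assume $v\neq0$, fix $y\in\Fix(v+T)$ (equivalently $y-Ty=v$) and $t\geq0$, and aim to show $T(y-tv)=y-(t+1)v$. Setting $p:=T(y-tv)-Ty$, nonexpansiveness gives $\norm{p}\leq\norm{(y-tv)-y}=t\norm{v}$, while substituting $T(y-tv)=(y-v)+p$ into \cref{e:vchar} with $z=y-tv$ gives, after a one-line simplification, $\scal{v}{p}\leq-t\norm{v}^2$. The point is that these two estimates are incompatible unless equality holds everywhere: chaining them through Cauchy--Schwarz yields $-t\norm{v}^2\geq\scal{v}{p}\geq-\norm{v}\norm{p}\geq-t\norm{v}^2$, whence $p$ is anti-parallel to $v$ with $\norm{p}=t\norm{v}$, forcing $p=-tv$. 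Then $T(y-tv)=(y-tv)-v$, i.e.\ $y-tv\in\Fix(v+T)$, which is \ref{f:gap_ray}.

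Granting \ref{f:gap_ray}, part \ref{f:gap_TFix} is a one-line induction on $n$: the case $n=0$ reads $T^0y=y$, and if $T^ny=y-nv$ then $y-nv\in\Fix(v+T)$ by \ref{f:gap_ray} (with $t=n$), so $T^{n+1}y=T(y-nv)=(y-nv)-v=y-(n+1)v$. Finally, for \ref{f:gap_Fejer} I would fix $x\in X$ and $y\in\Fix(v+T)$ and use \ref{f:gap_TFix} to rewrite $T^nx+nv-y=T^nx-T^ny$; then nonexpansiveness gives $\norm{T^{n+1}x+(n+1)v-y}=\norm{T^{n+1}x-T^{n+1}y}\leq\norm{T^nx-T^ny}=\norm{T^nx+nv-y}$, which is exactly Fej\'er monotonicity of $(T^nx+nv)_\nnn$ with respect to $\Fix(v+T)$.

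The main obstacle is \ref{f:gap_ray}, and within it the decisive move is the rigidity of the equality case of Cauchy--Schwarz: it is what upgrades the two soft bounds on $p$ into the exact identity $p=-tv$. This is also the place where the hypothesis that $v$ is the \emph{minimal-norm} element of $\cran(\Id-T)$, rather than an arbitrary element, is essential, since it is precisely that minimality which yields \cref{e:vchar}.
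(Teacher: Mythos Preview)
Your proof is correct and self-contained. Note, however, that the paper does not actually supply a proof of this Fact: it is stated with a citation to \cite[Proposition~2.4]{BM15} and left unproved, so there is no in-paper argument to compare against. Your argument --- using the obtuse-angle characterization of $v=P_{\cran(\Id-T)}0$ together with nonexpansiveness and the equality case of Cauchy--Schwarz to pin down $p=-tv$ --- is exactly the standard route to \ref{f:gap_ray}, and your derivations of \ref{f:gap_TFix} and \ref{f:gap_Fejer} from it are clean and correct.
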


\begin{remark}
\label{r:gap}
Suppose that $\Fix(v+T)\neq\varnothing$. 
Then 
\begin{subequations}
\begin{equation}
\label{e:twovec}
(\forall x\in X)(\forall y\in X)\quad
(T^nx-T^ny)_\nnn \;\;\text{is weakly convergent}
\end{equation}
if and only if 
\begin{equation}
\label{e:onevec}
(\forall x\in X)\quad
(T^nx+nv)_\nnn \;\;\text{is weakly convergent}.
\end{equation}
\end{subequations}
Indeed, if \eqref{e:twovec} holds, then
\eqref{e:onevec} follows by choosing $y\in\Fix(v+T)$ and
recalling \cref{f:gap}\ref{f:gap_TFix}.
Conversely, assume that \eqref{e:onevec} holds.
Then $(T^nx+nv)_\nnn$ and $(T^ny+nv)_\nnn$ are weakly convergent,
and so is their difference which yields \eqref{e:twovec}.
\end{remark}

We can now give a mild sufficient condition for
\eqref{e:question}:

\begin{theorem}
\label{P:dim:1}
Suppose that $X=\RR$, that $v\neq 0$, and that $\Fix(v+T)\neq\varnothing$. 
Then the sequence $(T^n x +nv)_\nnn$ is convergent. 
\end{theorem}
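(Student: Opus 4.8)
The plan is to obtain convergence from \cref{f:basic}\ref{f:basic_int}, i.e.\ from the interior criterion for Fej\'er monotone sequences, after setting up the right Fej\'er monotone structure. Since $\Fix(v+T)\neq\varnothing$ by hypothesis, \cref{f:gap}\ref{f:gap_Fejer} tells us that the sequence $(T^nx+nv)_\nnn$ is Fej\'er monotone with respect to $\Fix(v+T)$. Consequently, the whole theorem reduces to a single structural fact: that $\inte\Fix(v+T)\neq\varnothing$. Granting this, \cref{f:basic}\ref{f:basic_int} applies directly and delivers strong (equivalently, since $X=\RR$, ordinary) convergence of $(T^nx+nv)_\nnn$ to a point of $X$.

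The key step is therefore to verify that $\Fix(v+T)$ has nonempty interior, and this is exactly where the two standing hypotheses $v\neq0$ and $\Fix(v+T)\neq\varnothing$ are used. First I would fix some $y\in\Fix(v+T)$, which exists by assumption. By \cref{f:gap}\ref{f:gap_ray}, $y-\RP v\subseteq\Fix(v+T)$. Because $X=\RR$ and $v\neq0$, the ray $y-\RP v$ is the genuine half-line $\left]-\infty,y\right]$ when $v>0$ and $\left[y,+\infty\right[$ when $v<0$; in either case it has nonempty interior as a subset of $\RR$. Since this half-line is contained in $\Fix(v+T)$, it follows that $\inte\Fix(v+T)\neq\varnothing$, and the theorem is proved.

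I expect the only real obstacle to be the passage from ``$\Fix(v+T)$ contains the ray $y-\RP v$'' to ``$\Fix(v+T)$ has nonempty interior'': what must be checked is that $v\neq0$ forces a ray of positive length rather than a degenerate singleton, and this is precisely the content of \cref{f:gap}\ref{f:gap_ray} once one observes that the cone $\RP v$ is nondegenerate exactly because $v\neq0$. As a cross-check (and an independent route to the same conclusion), one may note that on $\RR$ the map $\Id-T$ is continuous and nondecreasing, so $v=P_{\cran(\Id-T)}0\neq0$ means $0\notin\cran(\Id-T)$ and hence $v$ is the endpoint of the interval $\cran(\Id-T)$ nearest the origin, i.e.\ $v=\inf\ran(\Id-T)$ up to sign; combined with $v\in\ran(\Id-T)$ (which is equivalent to $\Fix(v+T)\neq\varnothing$) and monotonicity of $\Id-T$, this shows that $(\Id-T)^{-1}(v)$ is again a half-line, reconfirming $\inte\Fix(v+T)\neq\varnothing$.
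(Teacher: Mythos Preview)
Your proposal is correct and follows essentially the same route as the paper: use \cref{f:gap}\ref{f:gap_Fejer} for Fej\'er monotonicity, \cref{f:gap}\ref{f:gap_ray} together with $v\neq 0$ and $X=\RR$ to see that $\Fix(v+T)$ contains a half-line and hence has nonempty interior, and then conclude via \cref{f:basic}\ref{f:basic_int}. Your additional cross-check via the monotonicity of $\Id-T$ on $\RR$ is a nice independent confirmation but is not needed for the argument.
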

\begin{proof}
By \cref{f:gap}\ref{f:gap_ray}\&\ref{f:gap_Fejer},
the sequence $(T^n x +nv)_\nnn$ is Fej\'er monotone with respect to 
$C := \Fix(v +T)$, and $C$ contains a ray. Therefore, 
$\inte C\neq\varnothing$ and 
\cref{f:basic}\ref{f:basic_int} yields the convergence of 
$(T^n x +nv)_\nnn$.
\end{proof}

\begin{remark}
Example~\ref{ex:-IdonR} shows that the assumption that $v\neq 0$
in \cref{P:dim:1} is important.
\end{remark}

\begin{theorem}
\label{p:Tny:affine}
Suppose that $T$ is affine, say $T\colon x\to Lx+b$,
where $L$ is linear and nonexpansive, and $b\in X$.
Suppose furthermore that $L$ is asymptotically
regular\footnote{Recall that $T$ is asymptotically regular at $x$
if $T^nx-T^{n+1}x\to 0$ and that $T$ is asymptotically regular if
it is asymptotically regular at every point.},
and let $x$ and $y$ be points in $X$.
Then 
\begin{equation}
T^n x -T^n y =L^n(x -y) \to P_{\Fix(L)}(x -y).
\end{equation}
\end{theorem}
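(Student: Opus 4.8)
The plan is to reduce the statement to a purely linear convergence claim and then exploit the symmetry between $L$ and its adjoint. Writing $z := x - y$, an easy induction gives the affine iterate formula $T^n x = L^n x + \big(\sum_{k=0}^{n-1} L^k\big)b$, so the additive constant cancels in the difference and $T^n x - T^n y = L^n z$, exactly as claimed. It therefore remains only to prove that $L^n z \to P_{\Fix(L)}z$. Since $L$ is linear, $0 \in \Fix(L) = \ker(\Id - L)$, so $\Fix(L)$ is a nonempty closed linear subspace and the projection is well defined.

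The key structural step is the orthogonal decomposition $X = \Fix(L) \oplus \cran(\Id - L)$, which I would obtain by first showing $\Fix(L) = \Fix(L^*)$. Indeed, if $Lz = z$, then using $\norm{L^* z} \leq \norm{z}$ together with $\scal{z}{L^* z} = \scal{Lz}{z} = \norm{z}^2$ one computes $\norm{z - L^* z}^2 = \norm{L^* z}^2 - \norm{z}^2 \leq 0$, forcing $L^* z = z$; the reverse inclusion follows by symmetry. Consequently $\cran(\Id - L) = (\ker(\Id - L^*))^\perp = \Fix(L)^\perp$, which yields the decomposition.

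With this in hand, write $z = p + w$ where $p := P_{\Fix(L)} z \in \Fix(L)$ and $w \in \cran(\Id - L)$. Then $L^n p = p$ for every $n$, so it suffices to prove $L^n w \to 0$. On the dense subset $\ran(\Id - L)$ this is immediate from asymptotic regularity: if $w' = u - Lu$, then $L^n w' = L^n u - L^{n+1} u \to 0$. For general $w$ I would pass to the limit by a standard approximation argument using nonexpansivity, namely $\norm{L^n w} \leq \norm{L^n(w - w')} + \norm{L^n w'} \leq \norm{w - w'} + \norm{L^n w'}$, so that $\limsup_n \norm{L^n w} \leq \norm{w - w'}$ can be made arbitrarily small. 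Combining the two pieces gives $L^n z = p + L^n w \to p = P_{\Fix(L)} z$, as required.

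The main obstacle is the identity $\Fix(L) = \Fix(L^*)$: it is precisely what makes the decomposition \emph{orthogonal} and lets asymptotic regularity act on exactly the complementary subspace $\cran(\Id - L)$, upgrading the merely weak convergence one would get from \cref{f:prettyknown} to strong convergence with the explicit limit $P_{\Fix(L)}z$. Everything else is routine, namely the affine cancellation, the invariance $L^n p = p$, and the density argument for $L^n w \to 0$.
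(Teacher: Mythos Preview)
Your proof is correct. The paper's own argument is much terser: it cites \cite[Theorem~3.2(ii)]{BM15} for the cancellation $T^nx-T^ny=L^n(x-y)$ and then invokes \cite[Proposition~5.27]{BC11} as a black box for the convergence $L^n z\to P_{\Fix(L)}z$. What you have written is essentially a self-contained proof of that cited proposition via the standard route---the identity $\Fix(L)=\Fix(L^*)$, the resulting orthogonal decomposition $X=\Fix(L)\oplus\cran(\Id-L)$, and the density argument transferring asymptotic regularity from $\ran(\Id-L)$ to its closure---so the underlying mathematics is the same, only unpacked rather than quoted.
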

\begin{proof}
Using \cite[Theorem~3.2(ii)]{BM15}, we have 
$(\forall\nnn)$ $T^n x -T^n y =L^n x - L^n y =L^n(x -y)$.
The asymptotic regularity assumption yields
$L^n(x -y) -L^{n+1}(x -y) \to 0$. 
Using \cite[Proposition~5.27]{BC11}, we see that altogether
$T^n x -T^n y =L^n(x -y) \to P_{\Fix(L)}(x -y)$ 
\end{proof}

To make further progress we impose now additional assumptions on
$T$. 
Recall that our nonexpansive $T$ is \emph{averaged} 
if 
there exist a nonexpansive operator
$R:X\to X$ and a constant $\alpha\in \left]0,1\right[$
such that $T=(1-\alpha)\Id+\alpha R$; equivalently, 
(see, e.g., \cite[Proposition~4.25]{BC11})
\begin{equation}
\label{e:averaged}
(\forall x \in X)(\forall y \in X)\quad \norm{Tx -Ty}^2 
+\tfrac{1-\alpha}{\alpha}\norm{(\Id -T)x -(\Id -T)y}^2 \leq \norm{x -y}^2.
\end{equation} 
If $\alpha=1/2$, then $T$ is said to be \emph{firmly
nonexpansive}. 
Averaged operators have proven to be a useful class in fixed
point theory and optimization; see \cite{BBR78} and
\cite{Comb04}. 

The following result yields a generalized asymptotic regularity
for averaged nonexpansive operators.

\begin{lemma}\label{f:gap_asymp}
Suppose that $T$ is averaged and that $\Fix(v+T)\neq\varnothing$. 
Then for every $x\in X$, 
$T^n x-T^{n+1}x\to v$; equivalently, $(T^n x+nv)_\nnn$
is asymptotically regular. 
\end{lemma}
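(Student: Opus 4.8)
The plan is to prove directly that $T^nx-T^{n+1}x\to v$; since the asymptotic regularity of $(T^nx+nv)_\nnn$ means precisely that $(T^nx+nv)-(T^{n+1}x+(n+1)v)=T^nx-T^{n+1}x-v\to 0$, the two assertions are literally the same statement and nothing further is needed for the ``equivalently'' clause. The strategy is to anchor the orbit of $x$ against the orbit of a generalized fixed point and to extract the displacement $T^nx-T^{n+1}x$ from the surplus term in the averagedness inequality \eqref{e:averaged}.

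First I would fix any $y\in\Fix(v+T)$, which exists by hypothesis. By \cref{f:gap}\ref{f:gap_TFix} we have $T^ny=y-nv$ for every $\nnn$, and hence $(\Id-T)T^ny=T^ny-T^{n+1}y=v$ is \emph{constant} in $n$. This identification is the crux of the argument: the displacement of the reference orbit is exactly the target vector $v$. Next I would apply \eqref{e:averaged} to the pair $(T^nx,T^ny)$. Since $(\Id-T)T^nx=T^nx-T^{n+1}x$ and $(\Id-T)T^ny=v$, the inequality becomes
\begin{equation*}
\norm{T^{n+1}x-T^{n+1}y}^2+\tfrac{1-\alpha}{\alpha}\norm{(T^nx-T^{n+1}x)-v}^2\leq\norm{T^nx-T^ny}^2.
\end{equation*}

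Writing $a_n:=\norm{T^nx-T^ny}^2$, this reads $\tfrac{1-\alpha}{\alpha}\norm{(T^nx-T^{n+1}x)-v}^2\leq a_n-a_{n+1}$. The sequence $(a_n)_\nnn$ is nonnegative and nonincreasing, hence convergent, so $a_n-a_{n+1}\to 0$. Because $\alpha\in\zeroun$, the constant $(1-\alpha)/\alpha$ is strictly positive, and therefore $\norm{(T^nx-T^{n+1}x)-v}\to 0$, i.e.\ $T^nx-T^{n+1}x\to v$, as required. I expect no genuine obstacle here: the only step needing care is the computation $(\Id-T)T^ny=v$ via \cref{f:gap}\ref{f:gap_TFix}, after which the conclusion follows from a routine telescoping argument driven by the averagedness surplus term, with the finite-dimensionality playing no role.
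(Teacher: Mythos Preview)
Your proof is correct and follows essentially the same route as the paper: pick $y\in\Fix(v+T)$, use \cref{f:gap}\ref{f:gap_TFix} to get $(\Id-T)T^ny=v$, apply the averagedness inequality \eqref{e:averaged} to the pair $(T^nx,T^ny)$, and read off $T^nx-T^{n+1}x\to v$ from the monotonicity of $a_n=\norm{T^nx-T^ny}^2$. The only cosmetic difference is that the paper telescopes to obtain the slightly stronger summability $\sum_n\norm{T^nx-T^{n+1}x-v}^2<\pinf$, whereas you merely use $a_n-a_{n+1}\to 0$; both conclusions suffice.
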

\begin{proof}
Let $x\in X$ and $y\in \Fix (v+T)$.
Since $T$ is averaged, 
it follows from \cref{e:averaged}  
and 
\cref{f:gap}\ref{f:gap_TFix} that 
there exists $\alpha\in \left]0,1\right[$ such that 
\begin{equation}
(\forall\nnn) \quad
\|T^{n+1} x-T^{n+1}y\|^2\leq \|T^{n} x-T^{n}y\|^2
-\tfrac{1-\alpha}{\alpha}\|{T^n x-T^{n+1}x-v}\|^2.
\end{equation}
Telescoping yields $\sum_{n=0}^{\infty}\norm{T^n x-T^{n+1}x-v}^2<+\infty$
and consequently $T^n x-T^{n+1}x\to v$. 
\end{proof}

Amazingly, on the real line, averagedness is a sufficient
condition for \eqref{e:question}:

\begin{theorem}
\label{p:Tny:R}
Suppose that $X=\RR$ and that $T$ is averaged. 
Let $x$ and $y$ be in $\RR$. 
Then the sequence $(T^n x -T^n y)_\nnn$ is convergent. 
\end{theorem}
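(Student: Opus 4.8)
The plan is to argue directly on the real line from the averagedness inequality \eqref{e:averaged}, bypassing the set $\Fix(v+T)$ altogether. This is cleaner than routing through \cref{P:dim:1} and \cref{f:gap_asymp}, since those require $\Fix(v+T)\neq\varnothing$ and would leave the case $\Fix(v+T)=\varnothing$ (which genuinely occurs, e.g.\ when $\Id-T>0$ with infimum $0$ not attained) unresolved. Fix once and for all an averaging constant $\alpha\in\zeroun$ for which \eqref{e:averaged} holds, and set $d_n:=T^nx-T^ny$. If $x=y$ the sequence is identically $0$, so assume $x\neq y$.

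First I would record the elementary consequences of nonexpansiveness. Since $T$ is nonexpansive, $(|d_n|)_\nnn$ is nonincreasing, hence convergent to some $\ell\geq 0$. If $\ell=0$, then $d_n\to 0$ and we are done; so assume $\ell>0$. Then $|d_n|\geq\ell>0$ for every $n$, no $d_n$ vanishes, and the ratios $r_n:=d_{n+1}/d_n$ are well defined.

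The key computation is to locate the ratios $r_n$. Applying \eqref{e:averaged} to the pair $(T^nx,T^ny)$ and using that on $\RR$ we have $(\Id-T)T^nx-(\Id-T)T^ny=d_n-d_{n+1}$, we obtain $d_{n+1}^2+\tfrac{1-\alpha}{\alpha}(d_n-d_{n+1})^2\leq d_n^2$. Dividing by $d_n^2>0$ turns this into $r_n^2-2(1-\alpha)r_n+(1-2\alpha)\leq 0$, a quadratic in $r_n$ whose roots are $1$ and $1-2\alpha$; hence $r_n\in[1-2\alpha,1]$ for every $n$. In particular a sign change $r_n<0$ can occur only when $\alpha>1/2$, and then $|r_n|=-r_n\leq 2\alpha-1<1$, i.e.\ $|d_{n+1}|\leq(2\alpha-1)|d_n|$.

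Finally I would rule out infinitely many sign changes. If $r_n<0$ held for infinitely many $n$, then letting $n\to\infty$ along those indices in $|d_{n+1}|\leq(2\alpha-1)|d_n|$ gives $\ell\leq(2\alpha-1)\ell$, whence $2(1-\alpha)\ell\leq 0$; since $\alpha<1$ and $\ell>0$, this is absurd. Therefore $r_n\geq 0$ for all large $n$, so $(d_n)_\nnn$ has eventually constant sign, and together with $|d_n|\to\ell$ this forces $d_n\to\ell$ or $d_n\to-\ell$. In either case $(T^nx-T^ny)_\nnn$ converges. The main obstacle here is conceptual rather than computational: one must recognize that a sign change of $d_n$ is exactly a negative difference quotient of $T$ along the orbit, which \eqref{e:averaged} bounds away from $-1$ by the definite factor $2\alpha-1<1$; it is precisely the strictness $\alpha<1$ that excludes the oscillation exhibited by $T=-\Id$ in \cref{ex:-IdonR}.
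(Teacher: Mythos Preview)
Your proof is correct and follows essentially the same route as the paper's: both argue directly from the averagedness inequality \eqref{e:averaged} on $\RR$, deduce that the next iterate lies between $d_n$ and $(1-2\alpha)d_n$ (the paper phrases this as ``$a_{n+1}$ lies between $a_n$ and $\beta a_n$'' with $\beta=1-2\alpha$, you via the quadratic in $r_n=d_{n+1}/d_n$ with roots $1$ and $1-2\alpha$), and then use the strict contraction factor $|1-2\alpha|<1$ at each sign change to force either $|d_n|\to 0$ or eventually constant sign. Your organization---first reducing to $\ell>0$ and then ruling out infinitely many sign changes by passing to the limit---is a mild repackaging of the paper's three-case split, not a different argument.
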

\begin{proof}
Set $(\forall\nnn)$ $a_n :=T^n x - T^n y$. 
We must show that $(a_n)_\nnn$ is convergent. 
From \cref{e:averaged}, there exists 
$\alpha\in \left]0,1\right[$ such that
\begin{equation}
\label{e:averagedR}
(\forall\nnn)\quad a_{n+1}^2 +\tfrac{1-\alpha}{\alpha}(a_n -a_{n+1})^2 \leq a_n^2.
\end{equation}
Set $\beta := 1-2\alpha$ and note that $0\leq|\beta|<1$. 
By viewing \cref{e:averagedR} as a quadratic inequality in
$a_{n+1}$, we learn that 
\begin{equation}
(\forall\nnn)\quad
|a_{n+1}|\leq |a_n|\;\;
\text{and $a_{n+1}$ lies between $a_n$ and $\beta a_n$.}
\end{equation}
If some $a_{n_0}=0$, then $a_n\to 0$ and we are done.
So assume that $a_n\neq 0$ for every $\nnn$.
If $(a_n)_\nnn$ changes sign only finitely many times, then
$(a_n)_\nnn$ is eventually always positive or negative.
Since $(|a_n|)_\nnn$ is decreasing, we deduce that $(a_n)_\nnn$
is convergent. 
Finally, we assume that $(a_n)_\nnn$ changes signs frequently. 
If $n\in\NN$ and $\sgn(a_{n+1}) = -\sgn(a_{n})$, then 
$|a_{n+1}|\leq |\beta||a_n|$; since this occurs 
infinitely many times, it follows that $a_n\to 0$. 
\end{proof}

\begin{theorem}
\label{thm:dim}
Suppose that $X$ is finite-dimensional, that
$T$ is averaged, that $\Fix(v+T)\neq\varnothing$, and that
that $\operatorname{codim} \Fix (v+T)\leq 1$.
Then for every $(x,y)\in X\times X$, the
sequence $(T^nx-T^ny)_\nnn$ is convergent. 
\end{theorem}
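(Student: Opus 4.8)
The plan is to reduce the two-variable statement to a one-variable statement and then invoke the new convergence criterion \cref{p:codim1}. For fixed $(x,y)$, write $T^nx-T^ny=(T^nx+nv)-(T^ny+nv)$; hence it suffices to prove that $(T^nx+nv)_\nnn$ converges for every $x\in X$, since the difference of two convergent sequences converges. This is precisely the reduction recorded in \cref{r:gap}, where in the present finite-dimensional setting weak and strong convergence coincide, so that strong convergence of each shifted sequence immediately yields strong convergence of the difference.

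Fix $x\in X$ and set $z_n:=T^nx+nv$ and $C:=\Fix(v+T)$. I would first collect the two properties of $(z_n)_\nnn$ that feed into \cref{p:codim1}. By \cref{f:gap}\ref{f:gap_Fejer}, $(z_n)_\nnn$ is Fej\'er monotone with respect to $C$. For asymptotic regularity, note that $z_n-z_{n+1}=(T^nx-T^{n+1}x)-v$, which tends to $0$ by \cref{f:gap_asymp}; this is the one place where averagedness (rather than mere nonexpansiveness) is genuinely used. Thus $(z_n)_\nnn$ is a Fej\'er monotone, asymptotically regular sequence relative to the nonempty closed convex set $C$.

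It remains to produce convergence of $(z_n)_\nnn$ from the co-dimension hypothesis $\operatorname{codim}C\le 1$, splitting into two cases. If $\operatorname{codim}C=1$, then \cref{p:codim1} applies verbatim and yields convergence of $(z_n)_\nnn$. If $\operatorname{codim}C=0$, then $\aff C-\aff C=X$, so $\aff C=X$; a closed convex subset of a finite-dimensional space whose affine hull is the whole space has nonempty interior, and \cref{f:basic}\ref{f:basic_int} then delivers convergence of $(z_n)_\nnn$. In either case $(T^nx+nv)_\nnn$ converges, and taking differences finishes the proof.

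I do not expect a serious obstacle here, since the theorem is essentially an assembly of the machinery built earlier; the two points requiring care are (a) verifying that the \emph{shifted} sequence $(T^nx+nv)_\nnn$ inherits asymptotic regularity, which rests squarely on \cref{f:gap_asymp} and hence on averagedness, and (b) treating the degenerate case $\operatorname{codim}C=0$ separately, where one must argue that a full affine hull forces nonempty interior so that \cref{f:basic}\ref{f:basic_int} can stand in for \cref{p:codim1}.
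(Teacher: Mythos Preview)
Your proposal is correct and follows essentially the same approach as the paper's own proof: reduce via \cref{r:gap} to convergence of $(T^nx+nv)_\nnn$, use \cref{f:gap}\ref{f:gap_Fejer} for Fej\'er monotonicity and \cref{f:gap_asymp} for asymptotic regularity, then split on the co-dimension and apply \cref{f:basic}\ref{f:basic_int} when $\operatorname{codim}C=0$ and \cref{p:codim1} when $\operatorname{codim}C=1$. The only cosmetic difference is that you record asymptotic regularity before the case split, whereas the paper invokes it only in the $\operatorname{codim}C=1$ branch.
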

\begin{proof}
In view of \cref{r:gap}, we let $x\in X$ and
must show that $(T^nx+nv)_\nnn$ is convergent.
Set $C := \Fix(v+T)$ and 
$(\forall\nnn)$ $x_n := T^nx+nv$.
By \cref{f:gap}\ref{f:gap_Fejer}, 
$(x_n)_\nnn$ is Fej\'er monotone with respect to $C$.
Suppose first that $\operatorname{codim} C = 0$.
Then $\inte C\neq\varnothing$ and we are done by
\cref{f:basic}\ref{f:basic_int}.
Now assume that 
$\operatorname{codim} C = 1$.
By \cref{f:gap_asymp}, $(x_n)_\nnn$ is asymptotically regular.
Altogether, by \cref{p:codim1},
$(x_n)_\nnn$ is convergent. 
\end{proof}

\begin{corollary}
\label{P:dim:2}
Let $x\in X$.  
Suppose that $X=\RR^2$, that $T$ is 
averaged, that $v\neq 0$, and that 
$\Fix(v+T)\neq\varnothing$.
Then for every $(x,y)\in X\times X$, 
the sequence $(T^n x -T^ny)_\nnn$ is convergent. 
\end{corollary}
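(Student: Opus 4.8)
The plan is to recognize this statement as a direct specialization of \cref{thm:dim} to the plane, so that the whole task reduces to checking the single hypothesis of \cref{thm:dim} that is not already among our assumptions, namely the co-dimension bound $\operatorname{codim}\Fix(v+T)\leq 1$. Indeed, $X=\RR^2$ is finite-dimensional, $T$ is averaged, and $\Fix(v+T)\neq\varnothing$ are all assumed outright; only the constraint on the size of the fixed-point set remains to be produced, and the extra hypothesis $v\neq 0$ is exactly what should supply it.

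To establish that bound, I would set $C:=\Fix(v+T)$ and first record that $C$ is a nonempty closed convex set, being the fixed-point set of the nonexpansive operator $v+T$. The decisive structural input is \cref{f:gap}\ref{f:gap_ray}, which gives $C-\RP v\subseteq C$; fixing any $y\in C$, this means the ray $\menge{y-tv}{t\in\RP}$ is contained in $C$. Since $v\neq 0$, this ray is nondegenerate, so its affine hull is the full line through $y$ in direction $v$. Consequently $\lspan\{v\}\subseteq \aff C-\aff C$, whence $\dim(\aff C-\aff C)\geq 1$ and therefore $\operatorname{codim}C = 2-\dim(\aff C-\aff C)\leq 1$.

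With $\operatorname{codim}\Fix(v+T)\leq 1$ in hand, every hypothesis of \cref{thm:dim} is met (and that theorem already treats both the $\operatorname{codim}C=0$ and $\operatorname{codim}C=1$ cases internally, via \cref{f:basic}\ref{f:basic_int} and \cref{p:codim1} respectively), so it delivers the convergence of $(T^nx-T^ny)_\nnn$ for every $(x,y)\in X\times X$, completing the argument.

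As for the main obstacle: there is essentially none of substance, which is precisely the point of presenting this as a corollary. The only thing one must not overlook is that the assumption $v\neq 0$ is exactly what prevents the ray from collapsing to a point; without it the co-dimension could be $2$ (as when $\Fix(v+T)$ is a singleton), and the passage through \cref{thm:dim} would break down. So the ``work'' here lies entirely in the elementary dimension count feeding \cref{thm:dim}.
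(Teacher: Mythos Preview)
Your proof is correct and follows essentially the same approach as the paper: use \cref{f:gap}\ref{f:gap_ray} together with $v\neq 0$ to force $\dim\Fix(v+T)\geq 1$, hence $\operatorname{codim}\Fix(v+T)\leq 1$, and then invoke \cref{thm:dim}.
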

\begin{proof}
Because $v\neq 0$, \cref{f:gap}\ref{f:gap_ray} implies that 
$\dim \Fix(v+T) \geq 1$, i.e., 
$\operatorname{codim}\Fix(v+T) \leq \dim(X)-1 = 1$.
The result now follows from 
\cref{thm:dim}. 
\end{proof}

\section{Open problems}

We now present a list of open problems that
may be easier than the general question \eqref{e:question}. 
Let $x$ and $y$ be in $X$. 
\vspace{-0.4cm}
\begin{itemize}
\item[\textbf{P1:}] 
Suppose that $X=\RR$, $v=0$ but $\Fix(T) = \varnothing$.
Is $(T^nx-T^ny)_\nnn$ convergent?
\item[\textbf{P2:}] 
Suppose that $X=\RR$, $v\neq 0$ but $\Fix(v+T) = \varnothing$.
Is $(T^nx-T^ny)_\nnn$ convergent?
\item[\textbf{P3:}] 
Does \cref{P:dim:2} remain true if $\dim(X)\geq 3$?
\item[\textbf{P4:}] 
What can be said for \eqref{e:question}
if we replace ``weakly'' by ``strongly''?
\end{itemize}

Let us conclude with an example which numerically illustrates that the
answer to \textbf{P3} may be positive. 

\begin{example}
\label{ex:conj:R3}
Suppose that $X=\RR^3$ and let $A$
and $B$ be two closed balls in $X$. 
Set\footnote{For a nonempty closed convex subset $C$ of
$X $ we use $N_C$, $P_C$ and $R_C:=2P_C-\Id$ 
to denote the normal cone operator , the projector and the reflector associated with $C$,
respectively.} 
$T=\tfrac{1}{2}(\Id+R_BR_A)$.
Then $T$ is firmly nonexpansive and hence averaged.
(In fact, $T$ is the \emph{Douglas--Rachford operator} 
\cite{LM79} associated with the sets $A$ and $B$.)
It follows from \cite[Theorem~3.5]{BCL04} that $A\cap (B+v)+N_{\overline{A-B}}v\subseteq \Fix(v+T)\subseteq v+ A\cap (B+v)+N_{\overline{A-B}}v$.
Furthermore, \cite[Example~5.7]{BHM15} implies that 
$N_{\overline{A-B}}v$ is a ray, hence $\Fix(v+T)$ is ray and therefore
$\operatorname{dim} \Fix(v+T)=1$ and so 
$\operatorname{codim}\Fix(v+T)=2$. 
Even though \cref{thm:dim} is not applicable here, we still conjecture
that $(T^n x+nv)_\nnn$ converges (see Figure \ref{fig_2_balls} below).
\end{example}
\begin{figure}[h]
\begin{center}
\includegraphics[scale=0.54]{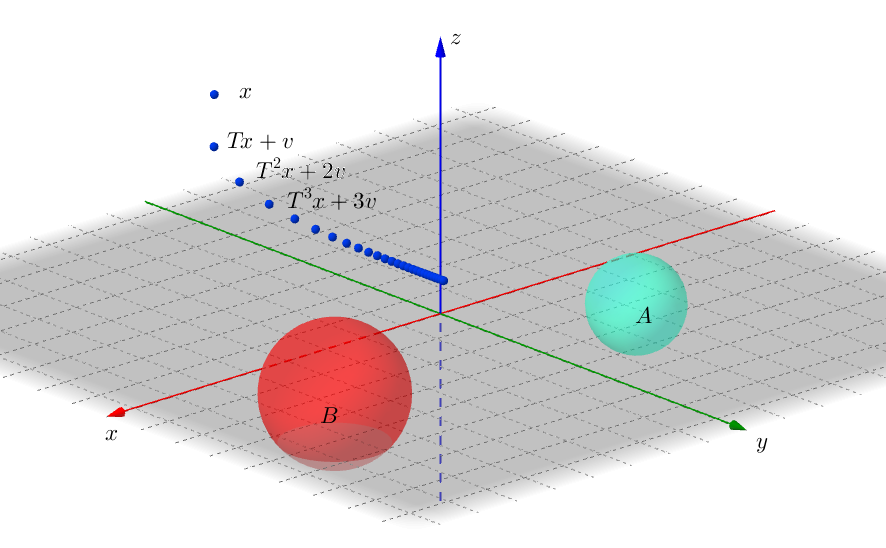}
\caption{A \texttt{GeoGebra} \cite{geogebra}
snapshot that illustrates \cref{ex:conj:R3}.
The first few terms of the 
 sequence $(T^n x+nv)_\nnn$ (blue points)
 are depicted. 
}
\label{fig_2_balls}
\end{center}
\end{figure}

\section*{Acknowledgments}
HHB was partially supported by the Natural Sciences and
Engineering Research Council of Canada 
and by the Canada Research Chair Program.
MND was partially supported by an NSERC accelerator grant of HHB. 

\vskip 8mm

\end{document}